\numberwithin{equation}{section}
\newtheorem{lemma}{Lemma}[section]
\newtheorem{proposition}{Proposition}[section]
\theoremstyle{definition}
\theoremstyle{remark}
\newtheorem{remark}{Remark}[section]
\providecommand{\abs}[1]{\left| #1 \right|}   %{\lvert#1\rvert} 
\providecommand{\prts}[1]{ \left( #1 \right) }
\author{   Tian Jing}
\address{Department of Mathematics, University of Michigan, Ann Arbor, MI 48109, USA.}
\email{tnjing@umich.edu}
\title[Self-Similar Solutions]
\keywords{Mixed-type PDE, self-similar solution, nonlinear ODE system, boundary layer. }
\subjclass[2020]{35M10, 35C06,  35A24, 34A34, 35K65}
\numberwithin{equation}{section}
\numberwithin{figure}{section}
\numberwithin{table}{section}
\begin{document}

\begin{abstract}
In this paper, we study the mixed-type equation $uu_x = u_{yy}$,
which behaves as forward and backward parabolic equations depending on the sign of $u$. 
The equation arises from the study of boundary layers with separation.
We seek  solutions that change their type smoothly to better understand the  equation. We simplify the equation into a second-order ODE using similarity variables, and prove an existence result by analyzing it as a first-order nonlinear ODE system. This provides us a self-similar solution with a sign change.  

\end{abstract}

\date{\today}

\maketitle

%\tableofcontents

\section{Introduction}

In this paper, we study the equation
\begin{equation} \label{main PDE}
	uu_x - u_{yy} = 0.
\end{equation}
Our goal is to find a suitable domain and obtain a solution whose  sign changes.
When $u>0$,  $u<0$, and $u=0$, the equation becomes parabolic, backward-parabolic, and degenerate, respectively. In general, it is hard to formulate the problem with suitable boundary conditions. Thus, we need to focus on some special solutions which gives us insights about the property of this equation.
One way is to construct suitable similarity variables to turn the PDE into an ODE. 
By designating a negative initial value and a positive initial derivative, 
we expect the solution to change its sign.

The equation \eqref{main PDE} was introduced by Oleinik  \cite{Oleinik.summerschool} in the study of boundary layer problems.  
In \cite{Oleinik.summerschool}, the equation is studied in a rectangular domain with boundary conditions given on all four sides, as shown in the left part of Figure \ref{figure: Oleinik formulation}. The positive and negative boundary conditions on the left and right sides behave like the initial value for forward and backward parabolic equations. The weak solution for Oleinik's problem is studied in \cite{Bocharov}.
In  \cite{Kuznetsov}, the left and right boundary values are only assumed to be $L^\infty$ without further requirements on their signs.  An entropy solution is obtained using the vanishing viscosity term $\varepsilon u_{xx}$. 

\begin{figure}[h]
	
		\centering
	\begin{minipage}{0.3\textwidth}
		
	\centering
	\begin{tikzpicture}
		\coordinate (A) at (0,0);
		\coordinate (B) at (0,2);
		\coordinate (C) at (3,2);
		\coordinate (D) at (3,0);
		
		\draw[blue, thick] (A) -- (B);
		\draw[black] (B) -- (C);
		\draw[red, thick] (C) -- (D);
		\draw[black] (D) -- (A);
		
		\node at (0.2,1) {\small $+$}; 
		\node at (1.5,1.8) {\small $0$}; 
		\node at (2.8,1) {\small $-$}; 
		\node at (1.5,0.2) {\small $0$}; 
	\end{tikzpicture}
	
\end{minipage}
	\hspace{0.1\textwidth}
	\begin{minipage}{0.3\textwidth}
			\centering
		\begin{tikzpicture}
		\coordinate (A) at (0,0);
		\coordinate (B) at (0,2);
		\coordinate (C) at (3,2);
		\coordinate (D) at (3,0);
		
		\draw[black,dashed] (A) -- (B);
		\draw[black] (B) -- (C);
		
		\draw[black,dashed] (C) -- (D);
		
		\draw[black] (D) -- (A);
		
		\draw[blue, thick] (0,1) -- (B);
		
		\draw[red, thick] (3,1) -- (D);
		
		\node at (0.2,1.4) {\small $+$}; 
		\node at (1.5,1.8) {\small $0$}; 
		\node at (2.8,0.5) {\small $-$}; 
		\node at (1.5,0.2) {\small $0$}; 
	\end{tikzpicture}
\end{minipage}
	\caption{Different boundary conditions}
	\label{figure: Oleinik formulation}
	
\end{figure}
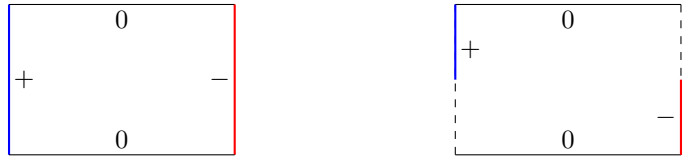

Another formulation   has been studied in \cite{Dalibard.version2025}, where  positive and negative boundary conditions are given partially on the left and right boundaries respectively, as shown in the right part of Figure \ref{figure: Oleinik formulation}. The general boundary layer problem with separation phenomena is studied in \cite{Masmoudi.Reversal,Masmoudi.Higher-regularity}.

If we focus on nonnegative solutions $u\geq 0$, a change of variable $w= u^2$ transforms our problem to the fast diffusion equation $w_x - \prts{w_y / \sqrt{w}}_y =0  $.
The self-similar solution for  $u_x - \prts{u^{s-1} u_y}$ with $s\leq 0$ is studied in \cite{Ferreira.self-similar.very-fast}.
We refer the readers to \cite{Vazquez.book.nonlinear-diffusion, Bonforte.FastDiffusion} for a detailed introduction to fast diffusion equations.

The forward-backward equation with structure $u_x= \prts{\phi(u)}_{yy}$ is studied in \cite{Smarrazzo.forward-backward, Smarrazzo.Sobolev-approximation, Gilding-Tesei.forward-backward.ODE,Mascia-Terracina-Tesei.forward-backward}, etc. 
In \cite{Gilding-Tesei.forward-backward.ODE}, the equation is transformed to an ODE using similarity variables, and all possible types of solutions to this ODE are discussed.
In \cite{Kim-Yan.forward-backward}, equations with principal structure $u_x = \prts{\phi(u_y)}_y $ are studied.

The main difficulty of our problem is the strong nonlinearity. After simplification  to an ODE, the equation is non-autonomous and contains two nonlinear terms with different behavior. We overcome this challenge by finding a suitable simplification and using  elementary ideas from the Picard–Lindel\"of theorem.

The paper will be organized as follows. In Section \ref{Section: transform to ODE}, we transform the PDE into an ODE and apply reasonable simplifications. 
In Section 
\ref{section: solve ODE A=1 B=0}, we prove the existence and uniqueness of solutions to this ODE.
In Section \ref{Section: self-similar solution}, we recover the profile of self-similar solution $u(x,y)$ and discuss some good properties of it.

\section{Transform to ODE}
\label{Section: transform to ODE}

In this section, we transform the equation $uu_x - u_{yy} = 0$ to an ordinary differential equation.

Suppose the solution has the form:
\[
	u(x,y) = \lambda(x) f\prts{\frac{y}{\delta(x)}} =: \lambda(x) f(\eta),
\]
with $\eta$ being the similarity variable
\[
	\eta:= \frac{y}{\delta(x)}.
\]
Computing the derivatives of $u$, we have
\[
	\begin{aligned}
		& u_x = \lambda'(x) f(\eta) + \lambda(x) f'(\eta) \cdot \frac{y \delta'(x)}{-\delta^2(x)}
		\\
		& \quad
		=  \lambda'(x) f(\eta) - \lambda(x) f'(\eta) \eta \frac{ \delta'(x)}{\delta(x)},
		\\
		& 
		u_y = \lambda(x) f'(\eta) \frac{1}{\delta(x)},
		\\
		& 
		u_{yy} = \lambda(x) f''(\eta) \frac{1}{\delta^2(x)}.
	\end{aligned}
\]
We will omit the variables in functions when there is no ambiguity.
Thus, we have
\[
	\lambda f \prts{ \lambda' f - \lambda f' \eta \frac{ \delta'}{\delta} } = \lambda f'' \frac{1}{\delta^2},
\]
which implies
\[
	f'' + \lambda \delta \delta' \cdot f f' \eta - \lambda' \delta^2 \cdot f^2 = 0.
\]
To obtain an equation with the single variable $\eta$, we need to assume 
\begin{equation} \label{simplify to ODE, constant terms}
	\begin{aligned}
		& \lambda(x) \delta(x) \delta'(x) \equiv A,
		\\
		& \lambda'(x) \delta^2(x)  \equiv B,
	\end{aligned}
\end{equation}
with $A$ and $B$ being two constants.
We study the ODE:
\begin{equation}
	\label{main ODE}
	\begin{aligned}
		&f'' + A f f' \eta - B f^2 = 0.
		\\
		& f(0)=a_0 ,
		\\
		& f'(0)=a_1.
	\end{aligned}
\end{equation}

To obtain a solution which changes its sign, we follow the most straightforward idea by assuming $f(0)<0$ and $f'(0)>0$.

\begin{remark}
	The special solution $u(x,y)=y$ belongs to the family of self-similar solutions that we study. 
	Let $\delta(x) = \lambda(x) = \alpha x^{\frac{1}{3}}$,
	then the solution $u(x,y)= y$ can be written as
	\[
		u(x,y)= \lambda(x) \eta.
	\]
	The function $f(\eta)=\eta$ is indeed a solution to
	\[
		f'' + \frac{\alpha^3}{3} f f' \eta - \frac{\alpha^3}{3} f^2 = 0 . 
	\]
	Another special case is $\delta(x) = \lambda(x) = \beta$. 
	The solution can still be expressed using 
	\[
	u(x,y)= \lambda(x) \eta ,
	\]
	where $f(\eta)=\eta$ is a solution to $f''=0 $.

\end{remark}

Letting $g= f'$, we can transform 
\eqref{main ODE}
to a first-order system.
\begin{equation}
	\label{first order nonlinear system}
	\begin{aligned}
		&f'(\eta)=g(\eta)
		\\
		& g'(\eta) = -A f(\eta)g(\eta) \eta + B f^2(\eta)
		\\
		& (f(0),g(0))= (f_0, g_0) 
	\end{aligned}
\end{equation}

Due to the complexity of this system, we study the behavior of terms $A f(\eta)g(\eta) \eta$ and $B f^2(\eta)$ separately.

If we assume $A=0$ and $B=1$, then 
\eqref{main ODE}
becomes $f''=f^2$. 
Multiplying the equation by $f'$, we obtain
\[
	\prts{f'}^2= \frac{2}{3}f^3 + C_1.
\]
If we choose $C_1=0$ and assume  $f>0$ and $f'>0$, then we obtain 
the solution
\[
	f(\eta)= \frac{6}{\prts{  \eta - C_2  }^2 }.
\]
The solution is the left branch due to the assumption $f'>0$. This implies a blow-up at finite value of $\eta$. Our goal is to obtain a self-similar solution $u$ at least exist in the first quadrant, which requires the solution $f(\eta)$ to exist for $\eta\in [0,+\infty)$. This motivates us to consider the case $A\neq 0$ and $B=0$, or a simpler case $A=1$ and $B=0$.

\section{Behavior of the Term $fg\eta$ When $A>0$ and $B=0$}
\label{section: solve ODE A=1 B=0}

In this section, we prove the existence and uniqueness of a solution to \eqref{first order nonlinear system} when $A>0$ and $B=0$.

For convenience and clarity, we use the equations 
\begin{equation} \label{ODE system in x y}
	\begin{aligned}
		&x'=y,
		\\
		& y' = -A xy t + B x^2,
		\\
		& (x(0),y(0))= (x_0, y_0) ,
	\end{aligned}
\end{equation}
when there is no confusion with \eqref{main PDE}

Assume $A>0$ and $B=0$. 
Without loss of generality, we may assume $A=1$, as a change of variables $\overline{x}=Ax$, $\overline{y}=Ay$ absorbs the constant $A$.  
Thus, we consider the problem
\begin{equation} \label{ODE system in x y, B=0}
	\begin{aligned}
		&x'=y,
		\\
		& y' = - xy t ,
		\\
		& (x(0),y(0))= (x_0, y_0) ,
	\end{aligned}
\end{equation}
with initial values  $x_0<0$ and $y_0>0$.
The main idea is to extend the solution step by step using the Picard--Lindel\"of theorem. See \cite{Teschl.ODE} Theorem 2.2 for details.

\subsection{Solution in the second quadrant}

Consider the closed cylinder 
\[
	(x,y,t)\in U_0 \times [-1,1]:= B((x_0,y_0);\delta) \times [-1,1],
\] 
with $\delta$ sufficiently  small such that
the closed disc %$\overline{B((x_0,y_0);\delta_0)}$
$ \overline{U_0} $ remains in the second quadrant.
The functions
$F(x,y,t)= y$  and  $G(x,y,t) = -xyt$ 
are Lipschitz continuous in $U_0\times [-1,1]$ with respect to $(x,y)$ and uniformly in $t$,  which implies the local existence of a unique $C^1$ solution 
inside the region 
\[
	%B((x_0,y_0);\delta_0) 
	U_0 \times [-\varepsilon_0 ,\varepsilon_0]
\]
for some sufficiently small $\varepsilon_0$.

Let $t_1=\varepsilon_0$ be the new initial time with initial value $(x_1,y_1):=(x(t_1), y(t_1))$. Notice that $x_1<0$ and $y_1>0$.
Consider the cylinder
\[
	U_1 \times \left[ t_1 -1,t_1 +1 \right] := B((x_1,y_1);\delta_1) \times \left[ t_1 -1,t_1 +1 \right]
\]
with $\delta_1$ chosen such that the closed disc %$\overline{B((x_1,y_1);\delta_1)}$
 $\overline{U_1}$
 remains in the second quadrant. 
Similarly, we obtain the existence and uniqueness of a solution in 
\[
 U_1 \times \left[ t_1 -\varepsilon_1 ,t_1 + \varepsilon_1 \right]
\]
for sufficiently small $\varepsilon_1$.

Let $t_2:= t_1 + \varepsilon_1$, and by repeating the steps above, we obtain a strictly increasing sequence $t_k$ such that
the unique solution exists on $[0,t_k]$ 
and 
$U_k$ remains in the second quadrant for all $k\in \mathbb{N}$.

\begin{figure}[h]
	\centering
	\begin{tikzpicture}[scale=1]
		\draw[->] (-4,0) -- (0.8,0) node[right] {$x$};
		\draw[->] (0,-0.4) -- (0,4) node[right] {$y$};
		
		\draw[thick, blue] (-3.2,1) .. controls (-2.8,1.6) and (-2.4,2) .. (-1.2,2.4);
		
		\filldraw[black] (-3.2,1) circle (0.04);
		\filldraw[black] (-2.76,1.56) circle (0.04);
		\filldraw[black] (-2.2,1.98) circle (0.04);
		\filldraw[black] (-1.2,2.4) circle (0.04);
		
		\draw[black] (-1.2,2.4) circle (0.8);
		\node at (-1.2,2.72) {$U_3$};

	\end{tikzpicture}
	\caption{The neighborhoods $U_k$ remain in the second quadrant. Both $x(t)$ and $y(t)$ are increasing.}
	
	\label{figure: extend solution in second quadrant}
\end{figure}
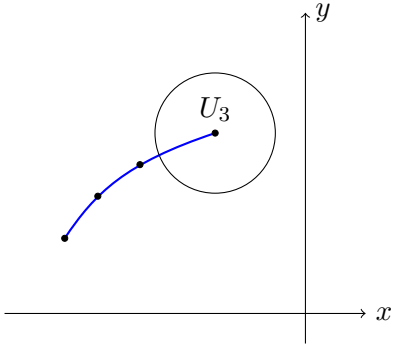

Let $T_1:= \sup_k t_k$. Then for all $t\in[0,T_1)$, we have $x(t)<0$ and $y(t)>0$.
This implies that $x'>0$ and $y'\geq 0$ by \eqref{ODE system in x y, B=0}. In fact, by our construction, we have $y'>0$ on $[t_1, T_1)$. We claim that the only possible behavior of the trajectory is to intersect the  $y$-axis at time $T_1$.

\begin{lemma} \label{lemma: solution in second quadrant}
	The trajectory intersects the positive $y$-axis in finite time, 
	i.e.,
	\[
		T_1<\infty, \quad \lim_{t\to T_1 } x(t)=0, \quad \text{ and }
		\quad 
		0< \lim_{t\to T_1 } y(t) <+\infty.
	\] 
\end{lemma}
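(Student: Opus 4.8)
The plan is to exploit the strict monotonicity of both components that was established just before the statement: on $[0,T_1)$ the trajectory satisfies $x(t)<0$ and $y(t)>0$, so by \eqref{ODE system in x y, B=0} we have $x'=y>0$ and $y'=-xyt\geq 0$ (strictly positive once $t>0$). Hence $x$ is increasing and $y$ is non-decreasing, and by monotone convergence the limits $x_\ast:=\lim_{t\to T_1}x(t)\leq 0$ and $y_\ast:=\lim_{t\to T_1}y(t)\in(0,+\infty]$ exist. The whole statement then splits into three claims: $T_1<\infty$, $y_\ast<\infty$, and $x_\ast=0$.

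First I would prove $T_1<\infty$. Since $y$ is non-decreasing with $y(t)\geq y_0>0$, integrating $x'=y$ yields $x(t)\geq x_0+y_0 t$. But $x(t)<0$ throughout the second quadrant, so $t<\abs{x_0}/y_0$, giving $T_1\leq \abs{x_0}/y_0<\infty$. Next, to obtain $y_\ast<\infty$, I would bound $y$ by a Gr\"onwall argument. Because $x$ increases toward $0$ we have $-x(t)\leq -x_0=\abs{x_0}$, and $t\leq T_1$, so the equation $y'=(-x)\,t\,y$ gives $y'\leq \abs{x_0}\,T_1\,y$; integrating on the now finite interval $[0,T_1)$ produces $y(t)\leq y_0\exp\prts{\abs{x_0}T_1^2}<\infty$. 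Combined with $y(t)\geq y_0>0$, this establishes $0<y_\ast<\infty$.

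The main point, and the place where the genuine ODE input enters, is showing $x_\ast=0$. Suppose instead that $x_\ast<0$. Then $(x(t),y(t))\to(x_\ast,y_\ast)$ as $t\to T_1$, and this limit lies in the \emph{open} second quadrant, where $F(x,y,t)=y$ and $G(x,y,t)=-xyt$ are $C^1$ and locally Lipschitz in $(x,y)$ uniformly in $t$. By the continuation property of the Picard--Lindel\"of theorem, a solution that remains in a compact subset of the domain over a finite time interval can be extended past its endpoint; concretely, starting a fresh Picard iteration from the point $(x_\ast,y_\ast)$ at time $T_1$ yields an extension that still lies in the second quadrant for $t$ slightly larger than $T_1$. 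This contradicts the definition of $T_1=\sup_k t_k$ as the supremum of the construction times. Hence $x_\ast=0$, i.e. the trajectory meets the positive $y$-axis precisely at $t=T_1$.

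I expect the extension/maximality argument in the final step to be the delicate part, since it requires certifying that the construction cannot stall at an interior point of the second quadrant — equivalently, that the only obstruction to continuing the trajectory is reaching the boundary $\{x=0\}$. The monotonicity together with the two a priori bounds from the earlier steps is exactly what rules out the remaining alternatives, namely the trajectory converging to an interior limit or the component $y$ blowing up before the axis is reached.
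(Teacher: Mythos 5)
Your proposal is correct and follows essentially the same route as the paper: the same three-claim decomposition ($T_1\leq \abs{x_0}/y_0$ from $x'=y\geq y_0$, boundedness of $y$, and the extension-by-Picard--Lindel\"of contradiction to rule out an interior limit point $x_\ast<0$). The only cosmetic difference is that you bound $y$ via Gr\"onwall applied to $y'\leq \abs{x_0}T_1\,y$ (exponential bound), whereas the paper bounds the slope $y'/x'=-xt\leq x_0^2/y_0$ and applies the Cauchy mean value theorem (linear bound); both yield $y_\ast<\infty$ with equal ease.
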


\begin{proof}
	Since $y'(t)\geq 0$, by \eqref{ODE system in x y, B=0} we have $x'(t) = y(t)\geq y_0 > 0$ on $[0,T_1)$. 
	By the mean value theorem, we have for all $t_k$ that
	\[
		\abs{x_0}\geq x(t_k) - x_0 = x'(\xi_k) t_k \geq y_0 t_k,
	\]
	which implies 
	\[
		0< T_1 = \sup_k t_k \leq \frac{\abs{x_0}}{y_0}  <\infty.
	\]
	
	Notice that $x(t)$ and $y(t)$ are both monotone increasing on $[0, T_1)$ and $x(t)< 0$. 
	We claim that $y(t)$ is bounded on $[0, T_1)$.
	%Assume $y(t)\to +\infty$ as $t\to T_1^-$. Then the slope $y'(t)/ x'(t) \to +\infty$.
 	From \eqref{ODE system in x y, B=0}, we have for all $t\in[0,T_1)$ that 
 	\[
 		\frac{y'(t)}{x'(t)} = \frac{-x(t)y(t)t}{y(t)} = -x(t)t 
 		\leq 
 		\abs{x_0} T_1
 		\leq
 		\frac{x_0^2}{y_0}.
 	\]
	Using the Cauchy mean value theorem, we have
	\[
		\frac{y(t)-y_0}{x(t)-x_0} = \frac{y'(\xi)}{x'(\xi)} \leq \frac{x_0^2}{y_0}
	\]
	for some $\xi\in (0,t) $, which implies
	\[
		y(t) \leq y_0 + \frac{x_0^2}{y_0} ( x(t)-x_0 )
		\leq  y_0 + \frac{\abs{x_0}^3}{y_0} .
	\]

	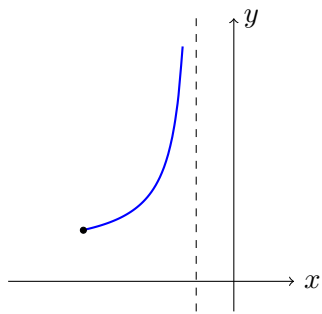
\begin{figure} [h]
		\centering
		\begin{tikzpicture}[scale=1]
			
			\draw[->] (-3,0) -- (0.8,0) node[right] {$x$};
			\draw[->] (0,-0.4) -- (0,3.5) node[right] {$y$};
			
			\draw[dashed] (-0.5, -0.4) -- (-0.5, 3.5);
			
			\draw[domain=-2:-0.68, smooth, variable=\x, blue, thick] plot ({\x}, {-0.5/(\x + 0.5) +0.35});
			
			\filldraw[black] (-2,0.68) circle (0.04);

		\end{tikzpicture}
		
		\caption{The only way for the solution to be unbounded is $x\to a $ and $y\to +\infty$, which leads to a contradiction in the behavior of tangent lines.}
		
		\label{figure: assume blow up in second quadrant}
		
	\end{figure}

	Now we claim that
	\[
		x(T_1^-):= \lim_{t\to T_1^-} x(t) = 0.
	\]
	Assume $x(T_1^-) < 0$. Then  $(x(T_1^-), y(T_1^-) )$ is in the second quadrant. 
	We can use it as an initial value one more time to extend $T_1$, which contradicts with the definition of $T_1$.

\end{proof}

\subsection{Solution in the first quadrant}
Now we study the solution when $t\geq T_1$.
We still use notations $U_k$ as long as there is no risk of ambiguity.
Let 
\[
 U_0 \times [T_1-1, T_1 + 1]
 := B((x(T_1),y(T_1));\delta_0) \times [T_1-1, T_1 + 1],
\] 
with $\delta_0$ chosen such that 
\[
	 \overline{U_0} \subseteq \{ y>0\}.
\]
Then we can obtain a solution inside the cylinder
\[
	U_0 \times [T_1-\varepsilon_0, T_1 + \varepsilon_0]
\]
for some $\varepsilon_0$.
Moreover, $x(t)$ is strictly increasing by \eqref{ODE system in x y, B=0}.
This step pushes the solution into the first quadrant, as shown in Figure \ref{figure: push solution to first quadrant}.

\begin{figure}[h]
	\centering
	\begin{tikzpicture}[scale=1]
		
		% Axes
		\draw[->] (-2,0) -- (1.5,0) node[right] {$x$};
		\draw[->] (0,-0.4) -- (0,3.5) node[right] {$y$};
		
		% Plot y = cos(x + 0.5) + 0.5 over x in [-1, 0]
		\draw[domain=-1.5:0.4, smooth, variable=\x, thick, blue] 
		plot ({\x}, {1.5*cos(deg(\x )) + 0.5});

		\filldraw[black] (-1.5,0.6) circle (0.04);
		
		\filldraw[black] (0,2) circle (0.04);
		
		\filldraw[black] (0.4,1.88) circle (0.04);

		\node at (-0.7,2.3) {$t=T_1$};

		\node at (0.7,1.5) {$t=t_1$};
		
		\draw[black] (0,2) circle (0.8);

	\end{tikzpicture}
	
	\caption{Pushing the solution into the first quadrant.}
	
	\label{figure: push solution to first quadrant}
	
\end{figure}
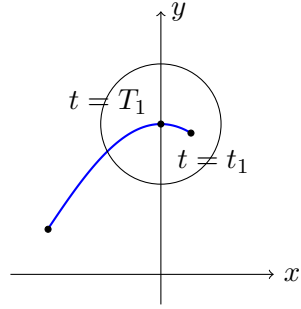

Now we study the behavior of the solution when it's completely in the first quadrant $\{x>0, y>0\}$.
Let $t_1 = T_1 + \varepsilon_0 $ and let $(x_1, y_1):= (x(t_1), y(t_1)) $.
We now choose
\[
U_1\times [t_1-1, t_1 + 1] := B((x_1,y_1);\delta_1) \times [t_1-1, t_1 + 1],
\] 
such that the closed disc $\overline{U_1}$ is in the first quadrant $\{ x>0, y>0 \}$.
We can find the unique solution in the region
\[
	U_1 \times [t_1-\varepsilon_1, t_1 + \varepsilon_1 ]
\]
for some $\varepsilon_1$.
Let $t_2 := t_1 + \varepsilon_1$,
and repeating the steps above, we can obtain a strictly increasing sequence $t_k$ 
such that 
the unique solution exists on $[T_1,t_k]$ and 
$ \overline{U_k} \subseteq \{ x>0, y>0\}$
for all $k\in\mathbb{N}$.

Let $T_2:= \sup_k t_k$. It is supposed to be the time that the solution intersects the positive  $x$-axis.

\begin{lemma}
	\label{lemma: solution in first quadrant}
	The solution exists on $[T_1,\infty)$, i.e.
	$T_2 = +\infty$. 
	The trajectory converges to a finite point $(a,0)$ on $x$-axis with $a>0$. 
\end{lemma}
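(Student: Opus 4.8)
The plan is to exploit the monotonicity that the system forces in the open first quadrant and to extract from it a sharp (Gaussian) decay rate for $y$, which will simultaneously prevent the trajectory from hitting the $x$-axis in finite time and guarantee that $x$ stays bounded. Throughout I work on the maximal interval of existence $[T_1,T_2)$ produced by the step-by-step construction, on which $x(t)>0$ and $y(t)>0$ for $t>T_1$ (recall $T_1>0$ and $x(T_1)=0$, $y(T_1)>0$ from Lemma \ref{lemma: solution in second quadrant}).

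First I would record the elementary monotonicity. Since $x'=y>0$ and $y'=-xyt<0$ on $(T_1,T_2)$, the function $x$ is strictly increasing (starting from $x(T_1)=0$) while $y$ is strictly decreasing. In particular $0<y(t)\leq y(T_1)$, so $x'(t)=y(t)\leq y(T_1)$ and $x$ grows at most linearly; hence neither coordinate can blow up in finite time.

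The heart of the argument is the decay of $y$. Fixing any $t^*\in(T_1,T_2)$ and setting $c_0:=x(t^*)>0$, monotonicity of $x$ gives $x(t)\geq c_0$ for $t\geq t^*$. Rewriting the second equation as $(\ln y)' = y'/y = -xt \leq -c_0 t$ and integrating yields the Gaussian bound
\[
	y(t)\leq y(t^*)\,\exp\!\left( -\tfrac{c_0}{2}\big( t^2-(t^*)^2 \big) \right),
	\qquad t\geq t^*.
\]
This bound does two things at once: it shows $y(t)>0$ on every finite interval, so the trajectory never reaches the $x$-axis in finite time, and it shows $\int_{t^*}^{\infty} y(s)\,ds<\infty$ together with $y(t)\to 0$.

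Finally I would combine these facts. Because the solution remains in a compact subset of the open first quadrant on each compact time interval and is bounded, the Picard--Lindel\"of continuation argument (exactly as in the second-quadrant case) shows the construction never terminates, i.e. $T_2=+\infty$. Writing $x(t)=\int_{T_1}^{t} y(s)\,ds$ and using $\int_{T_1}^{\infty} y<\infty$, we obtain $x(t)\to a:=\int_{T_1}^{\infty} y(s)\,ds$, a finite limit; since $x$ is strictly increasing from $0$ with $y>0$, we have $a>0$. Combined with $y(t)\to 0$, this gives convergence of the trajectory to $(a,0)$ with $a>0$. The main obstacle is the third step: one must secure a strictly positive lower bound on $x$ in order to upgrade the equation for $y$ into genuine Gaussian decay, and it is precisely this rate that rules out finite-time absorption into the $x$-axis while forcing $\int y$ to converge.
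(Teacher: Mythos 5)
Your proposal is correct in substance but follows a genuinely different route from the paper. The paper argues by contradiction with a case analysis: if $T_2<\infty$ and $x$ is unbounded, a slope bound $y'/x'\leq -1$ via the Cauchy mean value theorem forces $y$ negative; if $x$ is bounded and $y(T_2^-)>0$ one extends the solution, while the degenerate case $y(T_2^-)=0$ is excluded by a uniqueness argument against the trivial solution $x\equiv x(T_2^-)$, $y\equiv 0$; finally $a<\infty$ and $b=0$ are again obtained by contradiction. You instead integrate the $y$-equation exactly: since $y'=-x\,y\,t$ is linear in $y$, one has $y(t)=y(t^*)\exp\bigl(-\int_{t^*}^{t}x(s)s\,ds\bigr)$, and the lower bound $x\geq c_0>0$ past any fixed $t^*>T_1$ yields Gaussian decay. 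This buys more than the paper proves: an explicit decay rate, the identity $a=\int_{T_1}^{\infty}y(s)\,ds$ giving finiteness of $a$ directly from the integrable tail (no contradiction argument needed), and a cleaner bypass of the backward-uniqueness step. Your preliminary observation that $x'=y\leq y(T_1)$ also disposes of the paper's Case 1 (unbounded $x$ at finite $T_2$) in one line, which is simpler than the paper's slope argument.

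One step is misstated, though the repair is immediate with tools you already have. You claim the Gaussian \emph{upper} bound ``shows $y(t)>0$ on every finite interval''; an upper bound cannot yield positivity, and positivity is exactly what replaces the paper's uniqueness trick, so it must be justified. The correct argument is the complementary inequality: on any finite interval $[T_1,T]$ your linear-growth bound gives $x(t)\leq C:=y(T_1)(T-T_1)$, hence $(\ln y)'=-x t\geq -Ct$, so
\[
	y(t)\;\geq\; y(t^*)\exp\!\left(-\tfrac{C}{2}\bigl(t^2-(t^*)^2\bigr)\right)\;>\;0,
\]
equivalently, the exact representation $y(t)=y(t^*)\exp\bigl(-\int_{t^*}^{t}x(s)s\,ds\bigr)$ is strictly positive whenever the integrand is finite. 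This shows that if $T_2<\infty$ the trajectory stays in a compact subset of the open first quadrant up to $T_2$, so the Picard--Lindel\"of continuation applies and $T_2=+\infty$; after this one-line fix your proof is complete.
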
 

\begin{proof}
	Assume $T_2 < +\infty$. Since $x(t)$ is increasing on $[T_1,T_2)$, we can divide the cases by whether $x(t)$ is bounded.
	
	Case 1: Suppose $x(t)\to +\infty$ as $t\to T_2^-$, as shown in Figure \ref{figure: T2 finite x unbounded}.
	From \eqref{ODE system in x y, B=0}	we have
	\[
	\lim_{t\to T_2^-}\frac{y'(t)}{x'(t)} 
	= \lim_{t\to T_2^-} -x(t) t 
	= -\infty \cdot T_2 = -\infty.
	\] 
	Thus, we can find $\delta>0$ such that
	\[
		\frac{y'(t)}{x'(t)} \leq -1
	\]
	in $[T_2 - \delta, T_2)$.
	For all $t\in (T_2-\delta, T_2)$,
	by the Cauchy mean value theorem, we have
	\[
		\frac{y(t)-y(T_2- \delta)}{x(t)-x(T_2- \delta)}
		=
		\frac{y'(\xi)}{x'(\xi)}
		\leq -1
	\]
	for some $\xi\in (T_2-\delta , t)$.
	Since $y(t)-y(T_2- \delta) \leq 0$, we have
	\[
		0\leq y(t)
		\leq y(T_2- \delta) + x(T_2- \delta) - x(t) ,
	\]
	which is a contradiction as $x(t)\to +\infty$.

	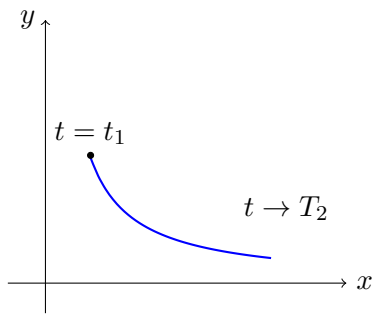
\begin{figure}[h]
		\centering
		\begin{tikzpicture}[scale=1]
			
			% Axes
			\draw[->] (-0.5,0) -- (4,0) node[right] {$x$};
			\draw[->] (0,-0.4) -- (0,3.5) node[left] {$y$};

			\draw[domain=0.6:3, smooth, variable=\x, thick, blue] 
			plot ({\x}, { 1/ \x  });

			\filldraw[black] (0.6,1.7) circle (0.04);
			
			\node at (0.6,2) {$t= t_1$};

			\node at (3.2,1) {$t\to T_2$};
			
			%\draw[->, dashed] (3.2,0.3) -- (3.6, 0.3);

		\end{tikzpicture}
		
		\caption{If $T_2<\infty$ and $x(t)$ is unbounded, a contradiction on the slope will occur. }
		
		\label{figure: T2 finite x unbounded}
		
	\end{figure}

	Case 2: Suppose $x(t)$ is bounded on $[T_1, T_2)$, as shown in Figure \ref{figure: T2 finite x bounded}.
	In this case, both $x(t)$ and $y(t)$ are monotone and bounded.
	Since $x(t_1)>0$, we have
	\[
		\begin{aligned}
			&
			x(T_2^-) := \lim_{t\to T_2^-} x(t) >0 ,
			\\
			&
			y(T_2^-) := \lim_{t\to T_2^-} y(t) \geq 0.
		\end{aligned}
	\]
	
		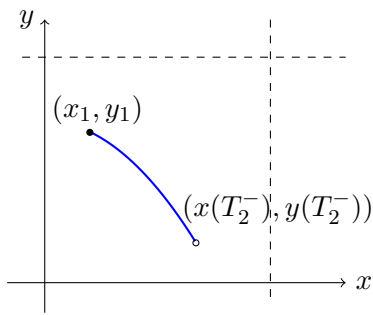
\begin{figure}[h]
		\centering
		\begin{tikzpicture}[scale=1]
			
			% Axes
			\draw[->] (-0.5,0) -- (4,0) node[right] {$x$};
			\draw[->] (0,-0.4) -- (0,3.5) node[left] {$y$};

			\draw[ dashed] (-0.3,3) -- (4, 3);
			
			\draw[ dashed] (3,3.5) -- (3, -0.2);

			\draw[domain=0.6:2, smooth, variable=\x, thick, blue] 
			plot ({\x}, { - 0.4* (\x)^2 + 2.15  });

			\filldraw[black] (0.6,2) circle (0.04);
			
			\node at (0.7,2.3) {$(x_1, y_1)$};
			
			\draw[black] (2.01,0.53) circle (0.04);

			% \node at (2.7,1) {$t\to T_2$};
			
			\node at (3.1,1) {$(x(T_2^-), y(T_2^-))$};

		\end{tikzpicture}
		
		\caption{If $T_2<\infty$ and $x(t)$ is bounded, the trajectory has a limit point. No matter $ y(T_2^-)>0$ or $ y(T_2^-)=0$, a contradiction will arise. }
		
		\label{figure: T2 finite x bounded}
		
	\end{figure}
	
	If $y(T_2^-)>0$, then we can use $( x(T_2^-), y(T_2^-) )$ as the initial value to extend $[T_1,T_2)$ using the same idea as in Lemma \ref{lemma: solution in second quadrant}. This contradicts the definition of $T_2$.
	
	If $y(T_2^-)=0$, then it contradicts with the uniqueness of solution. Notice that $x(t)\equiv x(T_2^-), y(t)\equiv 0$  also provides a solution.
	Consider the cylinder 
	\[
		 B( (x(T_2^-), 0);\delta )\times [T_2-1, T_2 ].
	\]
	There should be a unique solution in
	\[
		B( (x(T_2^-), 0);\delta )\times [T_2-\varepsilon, T_2 ],
	\]
	 for some $\varepsilon>0$, 
	 as $F(x,y,t)= y$  and  $G(x,y,t) = -xyt$ are Lipschitz continuous with respect to $(x,y)$ and uniformly in $t$.

	 Therefore, we conclude that $T_2 =+\infty$. Since $x(t)$ is monotone increasing, and $y(t)$ is monotone decreasing and bounded, it remains to prove
	 \[
	 \begin{aligned}
	 	&
	 	 a:= \lim_{t\to +\infty } x(t)   <\infty ,
	 	\\
	 	&
	 	 b:= \lim_{t\to +\infty} y(t) = 0.
	 \end{aligned}
	 \]
	Assume $a= +\infty$.
From \eqref{ODE system in x y, B=0}
we have
\[
\lim_{t\to +\infty} \frac{y'(t)}{x'(t)} 
=  \lim_{t\to +\infty} -x(t) t = -\infty.
\] 
Then there exists $M>0$ such that 
\[
	\frac{y'(t)}{x'(t)} \leq -1
\]
for all $t\in [M,+\infty)$.
Using the Cauchy mean value theorem, we have
\[
	\frac{y(t)- y(M)}{x(t)-x(M)} = \frac{y'(\xi)}{x'(\xi)} \leq -1
\]
for all $t\in(M,+\infty)$,
which implies 
\[
	 y(t)\leq y(M)+ x(M) - x(t) \to -\infty
\]
as $t\to+\infty$.
This contradicts the fact that $y\geq 0$.
Thus, we have $a<+\infty$.

Now assume that $b>0$. From $y'= -xyt$, we have
\[
	\lim_{t\to+\infty } y'(t) = -\infty,
\]
which is a contradiction as $y(t)$ is decreasing and $0\leq y(t)\leq y_1$. Thus, we have $b=0$.
This completes the proof.
\end{proof}

Using Lemma \ref{lemma: solution in second quadrant} and Lemma \ref{lemma: solution in first quadrant}, we can obtain a unique solution that crosses the $y$-axis and converges to the $x$-axis, as shown in Figure \ref{figure: whole solution in two quadrants}.

\begin{figure}[h]
	\centering
	\begin{tikzpicture}[scale=1]
		
		% Axes
		\draw[->] (-2,0) -- (2.5,0) node[right] {$x$};
		\draw[->] (0,-0.4) -- (0,3.5) node[right] {$y$};
		
		% Points
		\filldraw[black] (-1.5,0.6) circle (0.04);
		\filldraw[black] (0,2) circle (0.04);
		
		\node at (-0.7,2.3) {\footnotesize $t=T_1$};
		
		% Left wavy or cosine part
		\draw[domain=-1.5:0, smooth, variable=\x, thick, blue] 
		plot ({\x}, {1.5*cos(deg(\x )) + 0.5});
		
		% Replace this part with perfect arc
		\draw[thick, blue] (0,2) arc[start angle=90, end angle=0, radius=2];
		
		\draw[black] (2,0) circle (0.04);
		
		% Asymptote 
		\draw[dashed] (2,-0.4) -- (2,3.5);
		
		\draw[dashed] (-2,2) -- (3,2);
		
		\node at (1.1,0.4) {\footnotesize  $t\to +\infty$};
		
	\end{tikzpicture}
	
	\caption{A unique solution exists given generic initial values $x_0<0$ and $y_0>0$. }
	
	\label{figure: whole solution in two quadrants}

\end{figure}
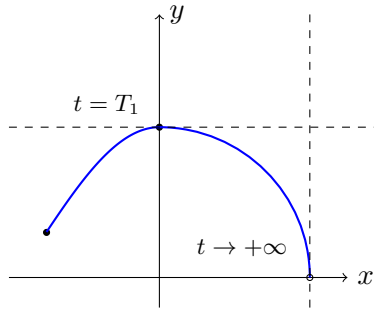

Therefore, we have obtained the following result for the original second-order ODE. The structure of the equation increases the smoothness  of the solution $f(\eta)$ from $C^1$ to $C^\infty$. 
\begin{proposition} \label{proposition: ODE of f, existence}
	Given any $a_0 <0$ and $a_1>0$, the equation 
	\begin{equation}
		\label{ODE of f term, existence result}
		\begin{aligned}
			&
			f''(\eta) = - f(\eta) f'(\eta) \eta,
			\\
			&
			f(0)=a_0 ,
			\quad 
			f'(0)= a_1
		\end{aligned}
	\end{equation}
	has a unique  solution $f\in C^\infty[0,+\infty)$, which is increasing and 
	\[
		0< \lim_{\eta\to+\infty}f(\eta) <+\infty.
	\]
\end{proposition}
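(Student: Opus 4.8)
The plan is to deduce the proposition directly from the two lemmas by recognizing the second-order problem as the planar system \eqref{ODE system in x y, B=0}. Setting $g=f'$, the initial value problem \eqref{ODE of f term, existence result} becomes exactly \eqref{ODE system in x y, B=0} under the dictionary $x(t)=f(\eta)$, $y(t)=g(\eta)=f'(\eta)$, $t=\eta$, with initial data $x_0=a_0<0$ and $y_0=a_1>0$, which is precisely the hypothesis of Lemmas \ref{lemma: solution in second quadrant} and \ref{lemma: solution in first quadrant}. Since the vector field $(F,G)=(y,-xyt)$ is a polynomial, it is smooth and locally Lipschitz everywhere, so through the given initial point there is a unique maximal $C^1$ solution; the task is then to show it is defined for all $\eta\geq 0$ and has the stated behavior.

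First I would invoke Lemma \ref{lemma: solution in second quadrant} on $[0,T_1)$: it gives $T_1<\infty$, $x<0$ and $y>0$ on $[0,T_1)$, and the crossing of the positive $y$-axis with $x(T_1^-)=0$ and $0<y(T_1^-)<\infty$. Then I would apply Lemma \ref{lemma: solution in first quadrant}, whose construction begins precisely from the crossing point $(x(T_1),y(T_1))=(0,y(T_1^-))$ with $y(T_1^-)>0$, to continue the trajectory on $[T_1,\infty)$, obtaining $T_2=+\infty$ and convergence to a limit point $(a,0)$ with $0<a<\infty$. Because the polynomial vector field is smooth across the $y$-axis, uniqueness guarantees that the second-quadrant and first-quadrant pieces are one and the same $C^1$ solution on $[0,+\infty)$.

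Translating back, $f=x$ exists on $[0,+\infty)$. Monotonicity is then immediate: $f'(\eta)=y(\eta)>0$ at every finite $\eta$ (strictly positive on $[0,T_1)$, equal to $y(T_1^-)>0$ at $T_1$, and positive throughout the first quadrant, tending to $0$ only as $\eta\to\infty$), so $f$ is strictly increasing. The limit is $\lim_{\eta\to+\infty}f(\eta)=\lim_{t\to+\infty}x(t)=a$, which satisfies $0<a<+\infty$ by Lemma \ref{lemma: solution in first quadrant}. This establishes every assertion except the smoothness.

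Finally, for regularity I would bootstrap off the equation. Picard--Lindel\"of already yields $f\in C^1$. Writing $f''=-\eta f f'$ and observing that the right-hand side is a polynomial in $(\eta,f,f')$, if $f\in C^k$ (so $f'\in C^{k-1}$) then the right-hand side lies in $C^{k-1}$, whence $f''\in C^{k-1}$ and $f\in C^{k+1}$; iterating from $k=1$ gives $f\in C^\infty[0,+\infty)$. I expect the only genuinely delicate point to be the matching at $\eta=T_1$, namely confirming that the two constructions describe a single trajectory rather than two unrelated ones. This is resolved by the global smoothness of the polynomial vector field together with the uniqueness provided by the Picard--Lindel\"of setup underlying both lemmas, so the gluing requires no additional argument.
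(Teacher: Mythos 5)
Your proposal is correct and follows essentially the same route as the paper: reduce to the planar system \eqref{ODE system in x y, B=0}, obtain the crossing of the positive $y$-axis from Lemma \ref{lemma: solution in second quadrant}, continue through the first quadrant to the limit point $(a,0)$ with $a>0$ via Lemma \ref{lemma: solution in first quadrant}, and glue the pieces by local Lipschitz uniqueness. Your explicit bootstrap argument for $f\in C^\infty[0,+\infty)$ merely spells out what the paper states in one sentence about the structure of the equation upgrading $C^1$ to $C^\infty$.
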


\section{The Self-Similar Solution}
\label{Section: self-similar solution}

In this section, we use the solution $f(\eta)$ in \eqref{ODE of f term, existence result} to recover the information about $u(x,y)$.

Since we assume $A=1$ and $B=0$, we solve the equations in \eqref{simplify to ODE, constant terms}
\[
		\begin{aligned}
		& \lambda(x) \delta(x) \delta'(x) = A = 1,
		\\
		& \lambda'(x) \delta^2(x)  = B = 0.
	\end{aligned}
\]
From the definition of the similarity variable $\eta = y/\delta(x)$, it is natural to assume $\delta \neq 0$, which implies $\lambda'=0$, i.e.,
\[
	\lambda(x)=c_1.
\]
From the first equation, we have
\[
	\delta(x) \delta'(x) = \frac{1}{c_1},
\]
which implies
\[
	\delta^2(x)  = \frac{2}{c_1} x + c_2.
\]
For simplicity, we choose $c_1=2$ and $c_2=0$, which implies 
\[
	\lambda(x)=2, \quad  \delta(x)=\sqrt{x}.
\]
Therefore, we have obtained a self-similar solution 
\begin{equation}
	u(x,y)= 2 f\prts{ \frac{y}{\sqrt{x}}}
\end{equation}
defined on $\{ x>0, y\geq 0 \}$.
The solution can be expressed by contour lines as in Figure \ref{contour lines of self-similar solution}.

\begin{figure}[h]
	
	\centering
	\begin{tikzpicture}
		\draw[->] (0,0) -- (4,0) node[right] {$x$};
		\draw[->] (0,0) -- (0,4) node[above] {$y$};
		\draw[domain=0:3.7] plot (\x,{sqrt(\x)});
		\node at (3,1.3) {$u = f(1)$};
		\draw[domain=0:3.5] plot (\x,{2*sqrt(\x)});
		\node at (3,2.9) {$u = f(2)$};
		
		\draw[domain=0:2] plot (\x,{3*sqrt(\x)});
		\node at (2.4,4) {$u = f(3)$};

	\end{tikzpicture}
	\caption{Contour lines of the self-similar solution}
	\label{contour lines of self-similar solution}
	
\end{figure}
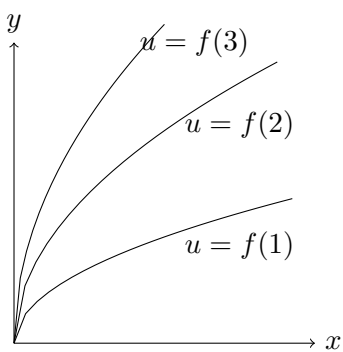

When $(x,y)\to (0,y_0)$ with $y_0>0$, we have $\eta \to +\infty$ and
\[
\lim_{(x,y)\to(0,y_0)} u(x,y)= 2 \lim_{\eta\to +\infty }  f\prts{ \eta},
\]
which enables us to extend the solution continuously to the positive  $y$-axis.

Since $f(\eta)$ is monotone increasing, we can verify by direct computation that $u_x \geq 0$ and $u_y \geq 0$.
In this case, it is much easier to obtain some estimates near the interface.

Consider the region $[X_1,X_2]\times[Y_1,Y_2]$ such that the boundary $\{u=0\}$ intersects  the left and right sides, as shown in Figure \ref{figure: self similar solution local region}. Then we obtain a problem similar to \cite{Dalibard.version2025}, which is strongly related to the study on Prandtl boundary layer equation in \cite{Masmoudi.Reversal, Masmoudi.Higher-regularity}.

\begin{figure}[h]
	
	\centering
	
	\begin{tikzpicture}
		\draw[->] (0,0) -- (4,0) node[right] {$x$};
		\draw[->] (0,0) -- (0,3) node[above] {$y$};
		
		\draw[domain=0:3.7] plot (\x,{sqrt(\x)});
		\node at (3.8,1.6) {$u=0$};
		
		\draw[blue, thick] (0.7,0.5) rectangle (2.3,1.8);
		
		\draw[dashed] (0.7,0.5) -- (0.7,0) node[below] {$X_1$};
		\draw[dashed] (2.3,1.8) -- (2.3,0) node[below] {$X_2$};
		\draw[dashed] (0.7,0.5) -- (0,0.5) node[left] {$Y_1$};
		\draw[dashed] (2.3,1.8) -- (0,1.8) node[left] {$Y_2$};
		
		\node[above left] at (0.8, 0.6) {$A$};
		\node[above right] at (2.2, 1.1) {$B$};
	\end{tikzpicture}

	\caption{Solution in a local region.}
	\label{figure: self similar solution local region}
	
\end{figure}
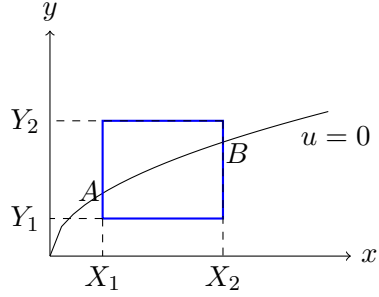

For this special self-similar solution, we can expect to get better estimates. For simplicity, we use $A$ and $B$ to denote their $x$ or $y$ coordinates as long as there is no confusion. Multiply the equation by $u$, and integrate over $[X_1,X_2]\times[Y_1,Y_2]$. Then we obtain
\[
	\begin{aligned}
		&
		0 = \int_{X_1}^{X_2}\int_{Y_1}^{Y_2} \prts{uu_x - u_{yy}} u dydx 
		\\
		&= \int_{X_1}^{X_2}\int_{Y_1}^{Y_2} u^2u_x dydx
		+ \int_{X_1}^{X_2}\int_{Y_1}^{Y_2} - u_{yy}u dydx
		\\
		& = I_1 + I_2.
	\end{aligned}
\]
For $I_1$ we have
\[
	\begin{aligned}
		&
		I_1 = 
		\int_{Y_1}^{Y_2} \int_{X_1}^{X_2} \prts{u^3}_x dx dy
		=
		\int_{Y_1}^{Y_2} u^3(X_2,y) dy
		-
		\int_{Y_1}^{Y_2} u^3(X_1,y)  dy
		\\
		&
		=
		\int_{Y_1}^{B} u^3(X_2,y)  dy
		+\int_{B}^{Y_2} u^3(X_2,y)  dy
		-
		\int_{Y_1}^{A} u^3(X_1,y)  dy
		- \int_{A}^{Y_2} u^3(X_1,y) dy.
	\end{aligned}
\] 
For $I_2$ we have
\[
	\begin{aligned}
		&
		I_2 
		=
		\int_{X_1}^{X_2}\int_{Y_1}^{Y_2}\prts{ - \prts{u_{y}u}_y + u_y^2 }dydx
		\\
		&
		=
		-\int_{X_1}^{X_2} u_{y}(x, Y_2)u (x, Y_2)   dx
		+ \int_{X_1}^{X_2}   u_{y}(x, Y_1) u (x, Y_1)  dx
		+
		\int_{X_1}^{X_2}\int_{Y_1}^{Y_2}   u_y^2 dydx.
	\end{aligned}
\]
Thus, we have
\[
\begin{aligned}		
	&
	\int_{X_1}^{X_2}\int_{Y_1}^{Y_2}   u_y^2 dydx
	+\int_{B}^{Y_2} u^3(X_2 ,y)  dy
	+
	\int_{Y_1}^{A} -u^3(X_1 ,y)  dy
	\\
	&=
	 \int_{Y_1}^{B} -u^3(X_2 ,y)  dy
	+ \int_{A}^{Y_2} u^3(X_1 ,y)  dy
	\\
	& \quad + \int_{X_1}^{X_2} u_{y}(x, Y_2)u (x, Y_2)   dx
	+ \int_{X_1}^{X_2}  - u_{y}(x, Y_1) u (x, Y_1)  dx,
\end{aligned}
\]
which is equivalent to
\[
\begin{aligned}		
	&
	\int_{X_1}^{X_2}\int_{Y_1}^{Y_2}   u_y^2 dydx
	+\int_{B}^{Y_2} \abs{u}^3(X_2 ,y)  dy
	+
	\int_{Y_1}^{A} \abs{u}^3(X_1 ,y)  dy
	\\
	&=
	\int_{Y_1}^{B} \abs{u}^3(X_2 ,y) dy
	+ \int_{A}^{Y_2} \abs{u}^3(X_1 ,y)  dy
	\\
	& \quad + \int_{X_1}^{X_2} u_{y}(x, Y_2)\abs{u} (x, Y_2)   dx
	+ \int_{X_1}^{X_2}   u_{y}(x, Y_1) \abs{u} (x, Y_1)  dx.
\end{aligned}
\]

\section{Acknowledgment}
The author would like to thank Professor Sijue Wu for valuable suggestions on the study of self-similar solutions.

% 2025-7-7 morning. checked the whole paper for the first time.
% 2025-7-18 afternoon. checked the whole paper again.

%------------theory ends here----------------------------------------------------------------

%------------below is reference-------------------------------------------------------

%\printbibliography
\bibliography{ref_bl}{}

\begin{thebibliography}{10}

\bibitem{Bocharov}
O.~B. Bocharov.
\newblock On the first boundary value problem for the heat equation with an
  alternating coefficient.
\newblock {\em Dinamika Sploshn. Sredy}, (37):27--39, 1978.

\bibitem{Bonforte.FastDiffusion}
M.~Bonforte and A.~Figalli.
\newblock The cauchy-dirichlet problem for the fast diffusion equation on
  bounded domains, 2023.
\newblock arXiv:2308.08394.

\bibitem{Dalibard.version2025}
A.-L. Dalibard, F.~Marbach, and J.~Rax.
\newblock Linear and nonlinear parabolic forward-backward problems, 2025.
\newblock arXiv:2203.11067.

\bibitem{Ferreira.self-similar.very-fast}
R.~Ferreira and J.~L. V\'azquez.
\newblock Study of self-similarity for the fast-diffusion equation.
\newblock {\em Adv. Differential Equations}, 8(9):1125--1152, 2003.

\bibitem{Gilding-Tesei.forward-backward.ODE}
B.~H. Gilding and A.~Tesei.
\newblock The {R}iemann problem for a forward-backward parabolic equation.
\newblock {\em Phys. D}, 239(6):291--311, 2010.

\bibitem{Masmoudi.Higher-regularity}
S.~Iyer and N.~Masmoudi.
\newblock Higher regularity theory for a mixed-type parabolic equation, 2022.
\newblock arXiv:2212.08735.

\bibitem{Masmoudi.Reversal}
S.~Iyer and N.~Masmoudi.
\newblock Reversal in the stationary prandtl equations, 2024.
\newblock arXiv:2203.02845.

\bibitem{Kim-Yan.forward-backward}
S.~Kim and B.~Yan.
\newblock On one-dimensional forward-backward diffusion equations with linear
  convection and reaction.
\newblock {\em J. Differential Equations}, 266(2-3):1578--1604, 2019.

\bibitem{Kuznetsov}
I.~V. Kuznetsov.
\newblock Entropy solutions to a second-order forward-backward parabolic
  differential equation.
\newblock {\em Siberian Math. J.}, 46(3):467--488, 2005.

\bibitem{Mascia-Terracina-Tesei.forward-backward}
C.~Mascia, A.~Terracina, and A.~Tesei.
\newblock Two-phase entropy solutions of a forward-backward parabolic equation.
\newblock {\em Arch. Ration. Mech. Anal.}, 194(3):887--925, 2009.

\bibitem{Oleinik.summerschool}
O.~A. Oleinik.
\newblock On some nonlinear problems of the theory of partial differential
  equations [in {R}ussian].
\newblock In {\em The First Summer Mathematical School}, volume Part 2, pages
  117--255. Naukova Dumka, Kiev, 1964.

\bibitem{Smarrazzo.Sobolev-approximation}
F.~Smarrazzo and A.~Terracina.
\newblock Sobolev approximation for two-phase solutions of forward-backward
  parabolic problems.
\newblock {\em Discrete Contin. Dyn. Syst.}, 33(4):1657--1697, 2013.

\bibitem{Smarrazzo.forward-backward}
F.~Smarrazzo and A.~Tesei.
\newblock Degenerate regularization of forward-backward parabolic equations:
  the regularized problem.
\newblock {\em Arch. Ration. Mech. Anal.}, 204(1):85--139, 2012.

\bibitem{Teschl.ODE}
G.~Teschl.
\newblock {\em Ordinary differential equations and dynamical systems}, volume
  140 of {\em Graduate Studies in Mathematics}.
\newblock American Mathematical Society, Providence, RI, 2012.

\bibitem{Vazquez.book.nonlinear-diffusion}
J.~L. V\'azquez.
\newblock {\em Smoothing and Decay Estimates for Nonlinear Diffusion Equations:
  Equations of Porous Medium Type}, volume~33 of {\em Oxford Lecture Series in
  Mathematics and its Applications}.
\newblock Oxford University Press, Oxford, 2006.

\end{thebibliography}
\bibliographystyle{abbrv} %abbrv.bst %plain

\end{document}